\newtheorem{theorem}{Theorem}[section]
\newtheorem{lemma}[theorem]{Lemma}
\newtheorem{corollary}[theorem]{Corollary}
\newtheorem{remark}[theorem]{Remark}
\newtheorem{construction}[theorem]{Construction}
\begin{document}

\centerline{\bf \Large{On a probabilistic problem on finite semigroups}}
\medskip
\centerline{\bf Attila Nagy and Csaba T\'oth}
\medskip
\centerline{Department of Algebra}
\centerline{Budapest University of Technology and Economics}
\centerline{M\H uegyetem rkp. 3, Budapest, 1111, Hungary}
\centerline{nagyat@math.bme.hu (A. Nagy), tcsaba94@gmail.com (Cs. T\'oth)}

\bigskip

\noindent
\centerline{\bf Abstract}

\medskip

In this paper we deal with the following problem: how does the structure of a finite semigroup $S$ depend on the probability that two elements selected at random from $S$, with replacement, define the same inner right translation of $S$. We solve a subcase of this problem. As the main result of the paper, we show how to construct not necessarily finite medial semigroups in which the index of the kernel of the right regular representation equals two.

\bigskip

\noindent
Keywords: Semigroup, regular representation of semigroups; medial semigroup

\noindent
Mathematics Subject Classification: 20M10, 20M15

\section{Introduction and motivation}\label{introduction}
There are many papers in the mathematical literature which use probabilistic methods to study special algebraic structures \cite{Dixon1, Dixon2, Eberhard, Erdos, Gustafson, Liebeck1, Liebeck2, Liebeck3, NagyToth, Palfy, Pyber}. In \cite{NagyToth}, the following problem is examined. If two elements, $a$ and $b$, are selected at random from a finite semigroup S, with replacement, what is the probability $P_{\theta _S}(S)$ that $a$ and $b$ define the same inner right translations of $S$, i.e., $(a, b)\in \theta _S$, where $\theta _S$ denotes the kernel of the right regular representation of $S$.
It is also investigated how does the structure of a finite semigroup S depend on the
probability $P_{\theta _S}(S)$.
It is shown that, for an arbitrary finite semigroup $S$, $P_{\theta _S}(S)\geq \frac{1}{\mid S/\theta _S\mid}$, where $\mid S/\theta _S\mid$ is the index of $\theta _S$.
Equality is satisfied if and only if each $\theta _S$-class of $S$ contains the same number of elements.
In two cases, a solution is given to the problem of how to construct finite semigroups $S$ satisfying the condition $P_{\theta _S}(S)=\frac{1}{\mid S/\theta _S\mid}$. These two cases are: $S$ is an arbitrary semigroup and the index of $\theta_S$ is $1$;
$S$ is a commutative semigroup and the index of $\theta _S$ is $2$.
In our present paper we answer the problem in that case when $S$ is a finite medial semigroup and the index of $\theta _S$ is $2$. We actually deal with a more general problem.
Our main result is Theorem~\ref{thm1}, in which we show how to construct not necessarily finite medial semigroups $S$ in which the index of $\theta_S$ equals $2$ (not necessarily fulfilling that each $\theta _S$-class of $S$ contains the same number of elements). We construct four medial semigroups and show that a semigroup $S$ is a medial semigroup such that the index of $\theta _S$ is $2$ if and only if $S$ is isomorphic to one of these four semigroups.

\section{Preliminaries}\label{prelim}
By a \emph{semigroup} we mean a multiplicative semigroup, that is, a nonempty set together with an associative multiplication.
A transformation of a semigroup $S$ (acting from the right) is called a \emph{right translation} of $S$, if $(xy)\varrho =x(y\varrho)$ for all $x, y\in S$.
The set of all right translations of a semigroup $S$ is a subsemigroup in the semigroup of all transformations of $S$. For an arbitrary element $a$ of a semigroup $S$, let $\varrho _a$ denote the \emph{inner right translation} $x\mapsto xa$ of $S$. It is known that $\Phi _S :a\mapsto \varrho _a$ is a homomorphism of the semigroup $S$ into the semigroup of all right translations of $S$. The homomorphisms $\Phi _S$ is called the \emph{(right) regular representation} of a semigroup $S$. Let $\theta  _S$ denote the kernel of the right regular representations of a semigroup $S$. It is obvious that $(a, b)\in \theta _S$ for elements $a, b\in S$ if and only if $sa=sb$ for all $s\in S$.

\begin{remark}\label{rem0} \rm If $A$ is a $\theta _S$-class of a semigroup $S$, then $\mid sA\mid =1$ for every $s\in S$, because $sa_1=sa_2$ for every $a_1, a_2\in A$.
\end{remark}

A non-empty subset $I$ of a semigroup $S$ is called an \emph{ideal} of $S$ if $as, sa\in I$ for every $a\in I$ and $s\in S$. If $I$ is an ideal of a semigroup $S$, then the relation $\varrho _I$ on $S$ defined by $(a, b)\in \varrho _I$ if and only if $a=b$ or $a, b\in I$ is a congruence on $S$ which is called the \emph{Rees congruence on $S$ determined by $I$}. The equivalence classes of $S$ mod $\varrho _I$ are $I$ itself and every one-element set $\{ a\}$ with $a\in S\setminus I$. The factor semigroup $S/\varrho_I$ is called the \emph{Rees factor semigroup of $S$ modulo $I$}. We shall write $S/I$ instead of $S/\varrho _I$. We may describe $S/I$ as the result of collapsing $I$ into a single (zero) element, while the elements of $S$ outside of $I$ retain their identity.

A homomorphism $\varphi$ of a semigroup $S$ onto an ideal $I\subseteq S$ is called a \emph{retract homomorphism} if $\varphi$ leaves the elements of $I$ fixed. An ideal $I$ of a semigroup $S$ is called a \emph{retract ideal} if there is a retract homomorphism of $S$ onto $I$. In this case, we say that $S$ is a \emph{retract (ideal) extension of $I$ by the Rees factor semigroup $S/I$}.

An element $e$ of a semigroup is called an \emph{idempotent element} if $e^2=e$. If every element of a semigroup $S$ is idempotent, then $S$ is called a \emph{band}.
A semigroup satisfying the identity $ab=a$ (resp., $ab=b$) is called a \emph{left zero (resp., right zero) semigroup}. It is clear that every left (resp., right) zero semigroup is a band. A commutative band is called a \emph{semilattice}.
A semigroup with a zero element $0$ is called a \emph{zero semigroup} if it satisfies the identity $ab=0$.

For a semigroup $S$, let $\omega _S$ denote the universal relation on $S$. The next lemma is a characterization of semigroups in which $\theta _S=\omega _S$.

\begin{lemma}\label{lem1}(\cite[Theorem 3.2]{NagyToth})
$S$ is a semigroup with $\theta _S=\omega _S$ if and only if $S$ is a retract ideal extension of a left zero semigroup by a zero semigroup.
\end{lemma}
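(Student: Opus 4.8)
The plan is to prove both implications directly, exploiting the fact that $\theta_S=\omega_S$ is equivalent to the identity $sa=sb$ holding for all $s,a,b\in S$; equivalently, the product $sx$ depends only on its left factor $s$. I would therefore introduce the map $\phi\colon S\to S$ defined by $\phi(s)=s^2$, observing that $\phi(s)=sx$ for \emph{every} $x\in S$, and set $L:=S^2=\phi(S)$. The whole argument for the forward direction then amounts to identifying $L$ as the relevant left zero ideal and $\phi$ as the retract homomorphism.

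For the forward implication I would establish four things. That $L$ is an ideal is immediate, since $SL,\,LS\subseteq S^2=L$. To see that $L$ is a left zero band, I would apply the hypothesis with left factor $s$ to get the absorption identity $s\,s^2=s\,s=s^2$, whence $(s^2)^2=s(s\cdot s^2)=s\cdot s^2=s^2$, so every element of $L$ is idempotent; and for $u,v\in L$ the hypothesis gives $uv=u^2=u$, so $L$ is left zero. Next, $\phi$ is a homomorphism: peeling off factors by associativity and using the hypothesis with left factors $t$ and then $s$ yields $\phi(st)=(st)(st)=s(t\cdot st)=s\cdot t^2=s\cdot s=\phi(s)=\phi(s)\phi(t)$, the last equality because $L$ is left zero. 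Since $\phi$ fixes every element of $L$ (these are idempotents), it is a retract homomorphism of $S$ onto $L$. Finally, every product $ab$ lies in $S^2=L$, so in the Rees factor $S/L$ all products collapse to the zero element; thus $S/L$ is a zero semigroup, and $S$ is a retract ideal extension of the left zero semigroup $L$ by the zero semigroup $S/L$.

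For the converse, suppose $\psi\colon S\to L$ is a retract homomorphism onto a left zero ideal $L$ for which $S/L$ is a zero semigroup. Then $ab\in L$ for all $a,b\in S$, because $ab$ is the zero of $S/L$; hence $\psi(sa)=sa$ as $\psi$ fixes $L$. But $\psi(sa)=\psi(s)\psi(a)=\psi(s)$ since $L$ is left zero, so $sa=\psi(s)$ is independent of $a$. This gives $sa=sb$ for all $s,a,b\in S$, i.e. $\theta_S=\omega_S$. The only delicate point in the whole proof is the repeated, careful use of the single identity $sx=sy$ in the forward direction — especially the verification that $\phi$ is multiplicative, where factors must be removed one at a time via associativity; once the absorption identity $s\,s^2=s^2$ is in hand, the remaining steps are routine bookkeeping.
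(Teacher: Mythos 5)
Your proof is correct. A point of context: the paper does not prove this lemma at all --- it is quoted from \cite[Theorem 3.2]{NagyToth} --- so there is no in-paper argument to compare against; your proposal supplies a valid, self-contained proof of the cited result. Both directions check out. In the forward direction, the key observations (from $sx=sy$ for all $s,x,y$ one gets $s\,s^2=s^2$, hence every element of $L=S^2$ is idempotent, $L$ is a left zero ideal, and $\phi(s)=s^2$ is a homomorphism onto $L$ fixing $L$ pointwise) are each verified by legitimate uses of associativity and the hypothesis, and $S^2\subseteq L$ makes the Rees factor $S/L$ a zero semigroup. Your converse argument --- $sa=\psi(sa)=\psi(s)\psi(a)=\psi(s)$, so $sa$ is independent of $a$ --- is in fact exactly the computation the paper records in Remark~\ref{rem2} ($a_1a_2=\varphi_A(a_1a_2)=\varphi_A(a_1)\varphi_A(a_2)=\varphi_A(a_1)$), so your proof is fully consistent with how the result is used throughout the paper; in particular your identification of $L$ with $S^2$ and with the set of idempotents matches the facts asserted in that remark.
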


\begin{remark}\label{rem1} \rm If $A$ is a $\theta _S$-class of a semigroup $S$ such that $A$ is a subsemigroup of $S$, then $aa_1=aa_2$ for every $a, a_1, a_2\in A$, and hence $\theta _A=\omega _A$. Then, using Lemma~\ref{lem1}, $A$ is a retract ideal extension of a left zero semigroup by a zero semigroup.
\end{remark}

\begin{remark}\label{rem2} \rm If a semigroup $A$ is a retract ideal extension of a left zero semigroup $E_A$ by a zero semigroup, then $A^2=E_A$ and the set of all idempotent elements of $A$ is $E_A$. If $\varphi _A$ is a retract homomorphism of $A$ onto $E_A$ (acting from the left), then
$a_1a_2=\varphi _A(a_1a_2)=\varphi _A(a_1)\varphi _A(a_2)=\varphi _A(a_1)$ for every $a_1, a_2\in A$.
\end{remark}

A semigroup $S$ is called a \emph{left,  (resp., right) commutative semigroup} if it satisfies the identity $xya=yxa$ (resp., $axy=ayx$).
A semigroup is said to be a \emph{medial semigroup} if it satisfies the identity $axyb=ayxb$. It is clear that every left commutative semigroup and every right commutative semigroup is medial. Left commutative, right commutative and medial semigroups are studied in many papers (see, for example, \cite{Chrislock, Gigon, Halili, Kehayopulu, Nagy:sg-1, Nagy:sg-2, Nagysupplement, Nagymedial, Nagymedleftequ, Nordahl, Strecker:sg-1}) and the books \cite{Nagybook, Petrichbook}. In this paper we also focus on them. The next
 lemma characterizes medial semigroups $S$
 with the help of the factor semigroup $S/\theta _S$.

\begin{lemma}\label{lem2} (\cite[Lemma 3.1]{Nagymedleftequ})
A semigroup $S$ is a medial semigroup if and only if the factor semigroup $S/\theta _S$ is left commutative.
\end{lemma}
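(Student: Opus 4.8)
The plan is to reduce both sides of the claimed equivalence to a single first-order identity on $S$ and then observe that these identities coincide after a relabelling of variables. The bridge connecting the two sides is the defining property of $\theta_S$ recorded in the Preliminaries: for $u,v\in S$ one has $(u,v)\in\theta_S$ if and only if $su=sv$ for every $s\in S$.

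First I would make explicit what left commutativity of the quotient says. Since $\theta_S$ is the kernel of the homomorphism $\Phi_S$, it is a congruence, so $S/\theta_S$ is a semigroup and the natural map $x\mapsto\bar x$ (sending $x$ to its $\theta_S$-class) is a homomorphism with $\bar u=\bar v$ exactly when $(u,v)\in\theta_S$. Consequently $S/\theta_S$ is left commutative, i.e. $\bar x\,\bar y\,\bar a=\bar y\,\bar x\,\bar a$ for all $x,y,a\in S$, if and only if $\overline{xya}=\overline{yxa}$ for all $x,y,a\in S$, which in turn holds if and only if $(xya,\,yxa)\in\theta_S$ for all $x,y,a\in S$. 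Next I would unwind this last membership through the definition of $\theta_S$: the statement $(xya,yxa)\in\theta_S$ for all $x,y,a$ is equivalent to
\[
s\,xya=s\,yxa\qquad\text{for all }s,x,y,a\in S.
\]
This is the heart of the argument: the universal quantifier already built into the definition of $\theta_S$ — the arbitrary left multiplier $s$ — supplies precisely the leading letter of the medial law. Renaming $s\mapsto a$ and the trailing $a\mapsto b$, the displayed condition reads $axyb=ayxb$ for all $a,x,y,b\in S$, which is exactly the medial identity. Reading this chain of equivalences in both directions yields the lemma.

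The argument is a direct unwinding of definitions, so there is no genuine obstacle; the single point that must be handled with care is the bookkeeping of variables. Specifically, one must verify that the last variable of the left-commutative law $xya=yxa$ plays the role of the trailing letter $b$ of the medial law $axyb=ayxb$, while the arbitrary multiplier $s$ coming from the definition of $\theta_S$ plays the role of the leading letter $a$. Once this matching is made explicit, the two conditions are literally the same identity, and in particular no appeal to an adjoined identity element (that is, to $S^1$) is required.
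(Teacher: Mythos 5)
Your proposal is correct. The chain of equivalences is sound: surjectivity of the natural map $S\to S/\theta_S$ turns left commutativity of the quotient into $(xya,yxa)\in\theta_S$ for all $x,y,a\in S$; the definition of $\theta_S$ (namely $(u,v)\in\theta_S$ iff $su=sv$ for all $s\in S$) turns that into the four-variable identity $sxya=syxa$; and this is literally the medial identity $axyb=ayxb$ after renaming. You are also right to flag the one point where care is needed: because $\theta_S$ here is defined by quantifying the left multiplier $s$ over $S$ (not over $S^1$), the leading letter of the medial law is supplied exactly by that quantifier, and no adjoined identity is needed — had $\theta_S$ been defined via $S^1$, the quotient condition would instead collapse to $xya=yxa$ in $S$, which is strictly stronger than mediality. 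One point of comparison: the paper itself offers no proof of this statement; it is imported verbatim as Lemma 3.1 of the cited reference (Nagy, \emph{A construction of left equalizer simple medial semigroups}), so there is no in-paper argument to measure yours against. Your definitional unwinding is the natural proof of the equivalence, and nothing further is required.
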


\begin{remark}\label{rem3} \rm It is easy to see that a left zero semigroup containing at least two elements is not left commutative. Thus a left commutative semigroup has two elements if and only if it is either a two-element semilattice, a two-element zero semigroup, a two-element group or a two-element right zero semigroup.
\end{remark}

For notions not defined but used in this paper, we refer the reader to books \cite{Clifford1, Nagybook, Petrichbook}.

\section{Results}\label{results}

At the beginning of this section we construct four medial semigroups which play an important role in the proof of our main theorem. All mappings in this sections act from the left.

\begin{construction}\label{constr1}\rm
Let $A$ be a semigroup which is a retract ideal extension of a left zero semigroup $E_A$ by a zero semigroup. Let $\varphi _A$ denote a retract homomorphism of $A$ onto $E_A$. By Remark~\ref{rem2}, the ideal $E_A$ of $A$ is the set of all idempotent elements of $A$, $A^2=E_A$, and $a_1a_2=\varphi _A (a_1)$ for every $a_1, a_2\in A$. Let $B$ be a semigroup such that $A\cap B=\emptyset$. Assume that $B$ is also a retract ideal extension of a left zero semigroup $E_B$ by a zero semigroup. Let $\varphi _B$ denote a retract homomorphism of $B$ onto $E_B$. By Remark~\ref{rem2}, the ideal $E_B$ of $B$ is the set of all idempotent elements of $B$, $B^2=E_B$, and $b_1b_2=\varphi _B(b_1)$ for every $b_1, b_2\in B$.
Let $\alpha$ be a homomorphism of $A$ into $E_B$ and $\beta$ be a homomorphism of $B$ into itself which leaves the elements of $E_B$ fixed. Assume that the equations
\begin{equation}\label{1.1}
\alpha \circ \varphi _A=\alpha,
\end{equation}
\begin{equation}\label{1.2}
\beta \circ \beta =\beta,
\end{equation}
\begin{equation}\label{1.3}
\varphi _B \circ \beta =\varphi _B
\end{equation}
are satisfied.
Since $\alpha$ maps $A$ into $E_B$, and the mappings $\varphi _B$ and $\beta$ leave the elements of $E_B$ fixed, we have
\begin{equation}\label{1.4}
\varphi _B \circ \alpha =\alpha =\beta \circ \alpha,
\end{equation}
\begin{equation}\label{1.5}
\beta \circ \varphi _B=\varphi _B.
\end{equation}
Let $S=A\cup B$. We define an operation $\cdot$ on $S$. For every $x, y\in S$, let
\[x\cdot y=\left\{ \begin{array}{ll}
\varphi_A(x) & \textrm{if $x, y\in A$,}\\
\alpha (x) & \textrm{if $x\in A$, $y\in B$,}\\
\beta (x) & \textrm{if $x\in B$, $y\in A$,}\\
\varphi _B(x) & \textrm{if $x, y\in B$.}\end{array}\right. \]
We show that the operation $\cdot$ is associative. Let $a\in A$ and $x, y\in S$ be arbitrary elements. Using equations (\ref{1.1}) and  (\ref{1.4}), we have
\[(a\cdot x)\cdot y=\left\{ \begin{array}{ll}
\varphi _A(a) & \textrm{if $x, y\in A$},\\
\alpha (a) & \textrm{otherwise.}\end{array}\right. \] By the definition of the operation $\cdot$, we have
\[a\cdot (x\cdot y)=\left\{ \begin{array}{ll}
\varphi _A(a) & \textrm{if $x, y\in A$},\\
\alpha (a) & \textrm{otherwise.}\end{array}\right. \]
Thus \[(a\cdot x)\cdot y=a\cdot (x\cdot y).\]
Let $b\in B$ and $x, y\in S$ be arbitrary elements. Using (\ref{1.2}), (\ref{1.3}) and (\ref{1.5}), we have
\[(b\cdot x)\cdot y=\left\{\begin{array}{ll} \beta (b) & \textrm{if $x, y\in A$},\\
\varphi _B(b) & \textrm{otherwise.}\end{array}\right. \] By the definition of the operation $\cdot$, we have
\[b\cdot (x\cdot y)=\left\{\begin{array}{ll} \beta (b), & \textrm{if $x, y\in A$},\\
\varphi _B(b), & \textrm{otherwise.}\end{array}\right. \] Thus \[(b\cdot x)\cdot y=b\cdot (x\cdot y).\]
Consequently the operation $\cdot$ is associative, and hence the algebraic structure $(S; \cdot )$ is a semigroup.
It is clear that $(S; \cdot )$ is a right commutative semigroup, and hence a medial semigroup.
It follows from the definition of the operation $\cdot$ that $(a_1, a_2)\in \theta _S$ and $(b_1, b_2)\in \theta _S$ for every $a_1, a_2\in A$ and $b_1, b_2\in B$.
For every $a\in A, b\in B$, we have $a\cdot a\in A$ and $a\cdot b\in B$, and hence $(a, b)\notin \theta _S$. Thus the $\theta _S$-classes of $S$ are $A$ and $B$.
Since $A^2\subseteq A$, $B^2\subseteq B$, $A\cdot B, B\cdot A\subseteq B$, the factor semigroup $S/\theta _S$ is a two-element semilattice.
\end{construction}

To illustrate Construction~\ref{constr1}, consider the following example. Let $A=\{ a, 0\}$ be a two-element zero semigroup ($0$ is the zero of $A$, and so $E_A=\{ 0\}$) and $B=\{ e, f\}$ be a two-element left zero semigroup (and hence $\varphi _B$ is the identity mapping of $B$). Let $\alpha$ be a mapping of $A$ into $B$ such that $\alpha (x)=e$ for every $x\in A$. It is clear that $\alpha$ is a homomorphism. Let $\beta$ be the identity mapping of $B$ (that is, $\beta =\varphi _B$). Then $\beta$ leaves the elements of $E_B$ fixed. The equations (\ref{1.1}), (\ref{1.2}), and (\ref{1.3}) are satisfied. The Cayley-table of the semigroup $(S; \cdot )$ is Table~\ref{Fig1}.

\begin{table}[htbp]
\begin{center}
\begin{tabular}{l|l l l l }
$\cdot$&$a$&$0$&$e$&$f$\\ \hline
$a$&$0$&$0$&$e$&$e$\\
$0$&$0$&$0$&$e$&$e$\\
$e$&$e$&$e$&$e$&$e$\\
$f$&$f$&$f$&$f$&$f$\\
\end{tabular}
\medskip
\caption{}\label{Fig1}
\end{center}
\end{table}

\begin{construction}\label{constr2}\rm
Let $B$ be a semigroup which is a retract ideal extension of a left zero semigroup $E_B$ by a zero semigroup. Let $\varphi _B$ denote a retract homomorphism of $B$ onto $E_B$. By Remark~\ref{rem2}, the ideal $E_B$ of $B$ is the set of all idempotent elements of $B$, $B^2=E_B$, and $b_1b_2=\varphi _B(b_1)$ for every $b_1, b_2\in B$. Let $A$ be a nonempty set such that $A\cap B=\emptyset$.
Let $\alpha$ be a mapping of $A$ into $B$, $\beta$ be a mapping of $A$ into $E_B$ and $\gamma$ be a homomorphism of $B$ into itself which leaves the elements of $E_B$ fixed. Assume $\alpha \neq \beta$ or $\gamma \neq \varphi _B$, and
\begin{equation}\label{2.1}
\varphi _B\circ \alpha =\beta =\gamma \circ \alpha,
\end{equation}
\begin{equation}\label{2.2}
\varphi _B\circ \gamma =\varphi _B= \gamma \circ \gamma.
\end{equation}
Since $\beta (A)\subseteq E_B$ and $\gamma$ leaves the element of $E_B$ fixed, we have
\begin{equation}\label{2.3}
\varphi _B\circ \beta =\beta =\gamma \circ \beta,
\end{equation}
\begin{equation}\label{2.4}
\gamma \circ \varphi _B=\varphi _B.
\end{equation}
Let $S=A\cup B$. We define an operation $\bullet$ on $S$. For every $x, y\in S$, let
\[x\bullet y=\left\{ \begin{array}{ll}
\alpha (x) & \textrm{if $x, y\in A$},\\
\beta(x) & \textrm{if $x\in A$ and $y\in B$},\\
\gamma (x) & \textrm{if $x\in B$, $y\in A$},\\
\varphi _B(x) & \textrm{if $x, y\in B$}.
\end{array} \right. \]
We show that the operation $\bullet$ is associative. Let $a\in A$ and $x, y\in S$ be arbitrary elements. Using (\ref{2.1}) and (\ref{2.3}),
we have
\[(a\bullet x)\bullet y=\beta (a).\]
By the definition of the operation $\bullet$, we have
\[a\bullet (x\bullet y)=\beta (a).\] Thus \[(a\bullet x)\bullet y=a\bullet (x\bullet y).\]

Let $b\in B$ and $x, y\in S$ be an arbitrary element. Using (\ref{2.2}), (\ref{2.4}) and the fact that $\varphi_B$ is a retract homomorphism,
we have \[(b\bullet x)\bullet y=\varphi _B(b).\]
By the definition of the operation $\bullet$, we have \[b\bullet (x\bullet y)=\varphi _B(b).\]
Thus \[(b\bullet x)\bullet y=b\bullet (x\bullet y).\]
Consequently the operation $\bullet$ is associative, and hence the algebraic structure $(S; \bullet )$ is a semigroup.
It is clear that $(S; \bullet )$ is a right commutative semigroup, and hence a medial semigroup.
It follows from the definition of the operation $\bullet$ that $(a_1, a_2)\in \theta _S$ and $(b_1, b_2)\in \theta _S$ for every $a_1, a_2\in A$ and $b_1, b_2\in B$.
Let $a\in A$ and $b\in B$ be arbitrary elements. Assume $(a, b)\in \theta_S$. Then, for every $a'\in A$ and every $b'\in B$, we have
$\alpha (a')=a'\bullet a=a'\bullet b=\beta (a')$ and
$\gamma (b')=b'\bullet a=b'\bullet b=\varphi _B(b')$. Thus $\alpha =\beta$ and $\gamma =\varphi _B$. This contradict the assumption that $\alpha \neq \beta$ or $\gamma \neq \varphi _B$.  Consequently the $\theta _S$-classes of $S$ are $A$ and $B$.
Since $A^2\subseteq B$, $B^2\subseteq B$, $A\bullet B, B\bullet A\subseteq B$, the factor semigroup $S/\theta _S$ is a two-element zero semigroup.
\end{construction}

To illustrate Construction~\ref{constr2}, consider the following example. Let $B=\{b, 0\}$ be a two-element zero semigroup ($0$ is the zero of $B$) and $A=\{ a\}$ be a singleton. Let $\alpha (a)=b$, $\beta (a)=0$, and $\gamma =\varphi _B$. Then $\beta (A)=\{ 0\} =E_B$, and $\gamma$ leaves the elements of $E_B$ fixed. Since $b\neq 0$, we have $\alpha \neq \beta$. Equations (\ref{2.1}) and (\ref{2.2}) are also satisfied. The Cayley-table of the semigroup $(S; \bullet )$ is Table~\ref{Fig2}.

\begin{table}[htbp]
\begin{center}
\begin{tabular}{l|l l l }
$\bullet$&$a$&$b$&$0$\\ \hline
$a$&$b$&$0$&$0$\\
$b$&$0$&$0$&$0$\\
$0$&$0$&$0$&$0$\\
\end{tabular}
\medskip
\caption{}\label{Fig2}
\end{center}
\end{table}


\begin{construction}\label{constr3}\rm
Let $A$ be a semigroup which is a retract ideal extension of a left zero semigroup $E_A$ by a zero semigroup. Let $\varphi _A$ denote a retract homomorphism of $A$ onto $E_A$. By Remark~\ref{rem2}, the ideal $E_A$ of $A$ is the set of all idempotent elements of $A$, $A^2=E_A$, and $a_1a_2=\varphi _A (a_1)$ for every $a_1, a_2\in A$. Let $B$ be a nonempty set. Let $\alpha$ be a mapping of $A$ into $B$ and $\gamma$ be a mapping of $B$ into $E_A$ such that the equations
\begin{equation}\label{3.1}
\alpha =\alpha \circ \varphi _A,
\end{equation}
\begin{equation}\label{3.2}
\gamma \circ \alpha =\varphi _A,
\end{equation}
are satisfied. Let
\begin{equation}\label{beta}
\beta=\alpha \circ \gamma.
\end{equation}
From the equations (\ref{3.1}) and (\ref{3.2}), it follows that
\begin{equation}\label{betanegyzet}
\beta \circ \beta =\beta,
\end{equation}
\begin{equation}\label{3.3}
\beta \circ \alpha=\alpha \circ \gamma \circ \alpha =\alpha \circ \varphi _A=\alpha.
\end{equation}
Using the fact that $\gamma (b)\in E_A$ and $\varphi _A$ leaves the elements of $E_A$ fixed, the equations (\ref{beta}) and (\ref{3.2}) imply
\begin{equation}\label{3.4}
\gamma \circ \beta =\gamma \circ \alpha \circ \gamma =\varphi _A \circ \gamma =\gamma.
\end{equation}
Let $S=A\cup B$. We define an operation $*$ on $S$. For every $x, y\in S$, let
\[x* y=\left\{ \begin{array}{ll}
\varphi _A(x) & \textrm{if $x, y\in A$},\\
\alpha (x) & \textrm{if $x\in A$ and $y\in B$},\\
\beta(x) & \textrm{if $x\in B, y\in A$},\\
\gamma (x)  & \textrm{if $x, y\in B$}.\end{array} \right. \]
We show that the operation $*$ is associative. Let $a\in A$ and $x, y\in S$ be an arbitrary elements. Using the equations (\ref{3.1}) and (\ref{3.3}),
we have
\[(a*x)*y=\left\{ \begin{array}{ll}
\varphi _A(a), & \textrm{if $x, y\in A$ or $x, y\in B$},\\
\alpha (a), & \textrm{otherwise.}\end{array}\right. \]
By the definition of the operation $*$, we have
\[a*(x*y)=\left\{ \begin{array}{ll}
\varphi _A(a), & \textrm{if $x, y\in A$ or $x, y\in B$},\\
\alpha (a), & \textrm{otherwise.}\end{array}\right. \] Thus \[(a*x)*y=a*(x*y).\]
Let $b\in B$ and $x, y\in S$ be an arbitrary elements. Using the equations (\ref{beta}) and (\ref{3.4}),
we have
\[(b*x)*y=\left\{ \begin{array}{ll}
\beta (b), & \textrm{if $x, y\in A$ or $x, y\in B$},\\
\gamma (b), & \textrm{otherwise.}\end{array}\right. \]
By the definition of the operation $*$, we have
\[b*(x*y)=\left\{ \begin{array}{ll}
\beta (b), & \textrm{if $x, y\in A$ or $x, y\in B$},\\
\gamma (b), & \textrm{otherwise.}\end{array}\right. \] Thus \[(b*x)*y=b*(x*y).\]
Consequently the operation $*$ is associative, and hence the algebraic structure $(S; * )$ is a semigroup.
It is clear that $(S; *)$ is a right commutative semigroup, and hence a medial semigroup.
It follows from the definition of the operation $*$ that $(a_1, a_2)\in \theta _S$ and $(b_1, b_2)\in \theta _S$ for every $a_1, a_2\in A$ and $b_1, b_2\in B$. Let $a\in A$ and $b\in B$ be arbitrary elements. Assume $(a, b)\in \theta_S$. Then, for every $a'\in A$, we have
\[A\ni a'*a=a'*b\in B\] which is a contradiction. Thus the $\theta _S$-classes of $S$ are $A$ and $B$.
Since $A^2\subseteq A$, $A*B, B*A\subseteq B$ and $B^2\subseteq A$, the factor semigroup $S/\theta _S$ is a two-element group.
\end{construction}

To illustrate Construction~\ref{constr3}, consider the following example. Let $A=\{ e\}$ be a one-element semigroup and $B=\{x, y\}$ be a two-element set. Let $\alpha$ be the mapping of $A$ onto the subset $\{ x\}$. Let $\gamma$ be the only possible mapping of $B$ onto $A$. We note that $\beta (b)=x$ for every $b\in B$.
It is easy to see that the above conditions for $\alpha$ and $\gamma$ are satisfied. The Cayley-table of the semigroup $(S; *)$ is Table~\ref{Fig3}.
\begin{table}[htbp]
\begin{center}
\begin{tabular}{l|l l l }
$*$&$e$&$x$&$y$\\ \hline
$e$&$e$&$x$&$x$\\
$x$&$x$&$e$&$e$\\
$y$&$x$&$e$&$e$\\
\end{tabular}
\medskip
\caption{}\label{Fig3}
\end{center}
\end{table}

\begin{construction}\label{constr4}\rm
Let $A$ be a semigroup which is a retract ideal extension of a left zero semigroup $E_A$ by a zero semigroup. Let $B$ be a semigroup such that $A\cap B=\emptyset$, and $B$ is also a retract ideal extension of a left zero semigroup $E_B$ by a zero semigroup. Let $\varphi _A$ and $\varphi _B$ denote a retract homomorphism of $A$ onto $E_A$ and $B$ onto $E_B$, respectively.
By Remark~\ref{rem2}, the ideal $E_A$ of $A$ is the set of all idempotent elements of $A$, $A^2=E_A$, and $a_1a_2=\varphi _A(a_1)$ for every $a_1, a_2\in A$. Similarly, the ideal $E_B$ of $B$ is the set of all idempotent elements of $B$, $B^2=E_B$, and $b_1b_2=\varphi _B(b_1)$ for every $b_1, b_2\in B$.
Let $\alpha$ be a homomorphism of $A$ into $E_B$ and $\beta$ be a homomorphism of $B$ into $E_A$. Assume that the equations
\begin{equation}\label{4.1}
\alpha \circ \varphi _A=\alpha =\varphi _B\circ \alpha,
\end{equation}
\begin{equation}\label{4.2}
\beta \circ \varphi _B=\beta =\varphi _A\circ \beta,
\end{equation}
\begin{equation}\label{4.3}
\alpha \circ \beta =\varphi _B,
\end{equation}
\begin{equation}\label{4.4}
\beta \circ \alpha =\varphi _A.
\end{equation}
are satisfied.
Let $S=A\cup B$. We define an operation $\star$ on $S$. For every $x, y\in S$, let
\[x\star y=\left\{ \begin{array}{ll}
\varphi _A (x) & \textrm{if $x, y\in A$},\\
\alpha (x) & \textrm{if $x\in A$, $y\in B$},\\
\beta (x) & \textrm{if $x\in B$, $y\in A$},\\
\varphi _B(x) & \textrm{if $x, y\in B$.}\end{array}\right. \]
We show that the operation $\star$ is associative. Let $a\in A$ and $x, y\in S$ be arbitrary elements. Using the equations (\ref{4.1}) and  (\ref{4.4}), we have
\[(a\star x)\star y=\left\{ \begin{array}{ll}
\varphi _A(a) & \textrm{if $y\in A$},\\
\alpha (a) & \textrm{otherwise.}\end{array}\right. \] By the definition of the operation $\cdot$, we have
\[a\star (x\star y)=\left\{ \begin{array}{ll}
\varphi _A(a) & \textrm{if $y\in A$},\\
\alpha (a) & \textrm{otherwise.}\end{array}\right. \] Thus \[(a\star x)\star y=a\star (x\star y).\]
Let $b\in B$ and $x, y\in S$ be arbitrary elements. Using (\ref{4.2}) and (\ref{4.3}), we have
\[(b\star x)\star y=\left\{ \begin{array}{ll}
\beta (b) & \textrm{if $y\in A$},\\
\varphi _B(b) & \textrm{if $y\in B$}.\end{array}\right. \] By the definition of the operation $\cdot$, we have
\[b\star (x\star y)=\left\{ \begin{array}{ll}
\beta (b) & \textrm{if $y\in A$},\\
\varphi _B(b) & \textrm{if $y\in B$}.\end{array}\right. \]
Thus \[(b\star x)\star y=b\star (x\star y).\]
Consequently the operation $\star$ is associative. It is clear that the semigroup $(S; \star )$ is a medial semigroup.
It follows from the definition of the operation $\star$ that $(a_1, a_2)\in \theta _S$ and $(b_1, b_2)\in \theta _S$ for every $a_1, a_2\in A$ and $b_1, b_2\in B$.
For every $a\in A, b\in B$, we have $a\star a\in A$ and $a\star b\in B$, and hence $(a, b)\notin \theta _S$. Thus the $\theta _S$-classes of $S$ are $A$ and $B$.
Since $A^2\subseteq A$, $B^2\subseteq B$, $A\star B\subseteq B$, $B\star A\subseteq A$, the factor semigroup $S/\theta _S$ is a two-element right zero semigroup.
\end{construction}

To illustrate Construction~\ref{constr4}, consider the following example. Let $A=\{ e, f\}$ and $B=\{ g, h\}$ be disjoint two-element left zero semigroups. Let $\alpha$ be a mapping of $A$ into $B$ such that $\alpha (e)=g$ and $\alpha (f)=h$. Let $\beta$ be the mapping of $B$ into $A$ such that $\beta (g)=e$ and $\beta (h)=f$ (that is, $\beta$ and $\alpha$ are inverses of each other). Since $A$ and $B$ are left zero semigroups, both of $\alpha$ and $\beta$ are homomorphisms. Since $E_A=A$ and $E_B=B$, $\alpha$ maps $A$ into $E_B$ and $\beta$ maps $B$ into $E_A$. In this example, $\varphi _A$ and $\varphi _B$ are the identities on the sets $A$ and $B$, respectively, which fact together with the note that the mapings $\alpha$ and $\beta$ are mutually inverse, readily yields the validity of the equations (\ref{4.1})--(\ref{4.4}). The Cayley-table of the semigroup $(S; \star )$ is Table~\ref{Fig4}.

\begin{table}[htbp]
\begin{center}
\begin{tabular}{l|l l l l }
$\star$&$e$&$f$&$g$&$h$\\ \hline
$e$&$e$&$e$&$g$&$g$\\
$f$&$f$&$f$&$h$&$h$\\
$g$&$e$&$e$&$g$&$g$\\
$h$&$f$&$f$&$h$&$h$\\
\end{tabular}
\medskip
\caption{}\label{Fig4}
\end{center}
\end{table}

\begin{theorem}\label{thm1} A semigroup $S$ is a medial semigroup such that the index of $\theta _S$ is $2$ if and only if $S$ is isomorphic to one of the semigroups defined in Construction~\ref{constr1}, Construction~\ref{constr2}, Construction~\ref{constr3}, and Construction~\ref{constr4}.
\end{theorem}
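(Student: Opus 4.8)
The plan is to dispose of the ``if'' direction immediately and then concentrate on the converse. For the ``if'' direction there is nothing left to prove: each of Constructions~\ref{constr1}--\ref{constr4} was shown, as it was built, to be a medial semigroup whose $\theta_S$-classes are exactly the two sets $A$ and $B$, so each yields a medial semigroup in which the index of $\theta_S$ is $2$. (The four templates are genuinely different, since their factors $S/\theta_S$ are, respectively, a two-element semilattice, zero semigroup, group and right zero semigroup.)

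For the converse, let $S$ be a medial semigroup with $|S/\theta_S|=2$ and let $A,B$ be its two $\theta_S$-classes. By Lemma~\ref{lem2} the factor $S/\theta_S$ is left commutative, and being a two-element left commutative semigroup it is, by Remark~\ref{rem3}, one of exactly four semigroups: a two-element semilattice, a two-element zero semigroup, a two-element group, or a two-element right zero semigroup. This splits the proof into four cases, which I claim correspond precisely to Constructions~\ref{constr1}--\ref{constr4}. In each case the multiplication of $S/\theta_S$ dictates into which class each of the four products $A^2,AB,BA,B^2$ falls; e.g.\ in the semilattice case $A^2\subseteq A$, $B^2\subseteq B$, $AB,BA\subseteq B$, in the group case $A^2\subseteq A$, $B^2\subseteq A$, $AB,BA\subseteq B$, and so on. Whenever a class is closed under multiplication it is a $\theta_S$-class that is also a subsemigroup, so by Remark~\ref{rem1} (and Remark~\ref{rem2}) it is a retract ideal extension of a left zero semigroup by a zero semigroup; this supplies the data $\varphi_A$ and/or $\varphi_B$ together with the identities $a_1a_2=\varphi_A(a_1)$ and $b_1b_2=\varphi_B(b_1)$.

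The heart of the argument is to recover the cross maps and match the operation. I would define the off-diagonal maps simply as the restrictions of the multiplication of $S$ to the blocks $A\times B$ and $B\times A$ (and, in the cases where a class is not a subsemigroup, the diagonal block as well): for instance $\alpha(a):=ab$ and $\beta(b):=ba$. These are well defined independently of the chosen representative, because if $b,b'$ lie in one $\theta_S$-class then $sb=sb'$ for every $s\in S$, and dually for $A$; this is the defining property of $\theta_S$. I would then verify, one block at a time and using only associativity together with the structural identities from Remark~\ref{rem2}, that each such map has the codomain demanded by the relevant construction and is a homomorphism. The key points are that a cross-product such as $ab$ is in fact idempotent, hence lies in $E_B$: computing $(ab)(ab)=a\bigl(b(ab)\bigr)=a\varphi_B(b)=ab$ shows this, and the same style of manipulation yields $\varphi_B\circ\beta=\varphi_B$, $\beta\circ\beta=\beta$, and, in the group and right zero cases, the inverse-type relations $\gamma\circ\alpha=\varphi_A$, $\alpha\circ\beta=\varphi_B$, $\beta\circ\alpha=\varphi_A$. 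Once all the defining equations of the appropriate construction are checked, the operation of $S$ coincides block-by-block with the operation built in that construction from the extracted data, so $S$ is isomorphic (indeed equal, on $A\cup B=S$) to it. In the zero-semigroup case one must also observe that the side condition $\alpha\neq\beta$ or $\gamma\neq\varphi_B$ of Construction~\ref{constr2} holds automatically: were it to fail, the computation in that construction shows $(a,b)\in\theta_S$, contradicting that $A$ and $B$ are distinct classes.

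I expect the main obstacle to be precisely this block-by-block verification in the four cases, and within it the two recurring technical difficulties: showing that the cross maps land in the idempotent subsemigroups $E_A,E_B$ rather than merely in $A,B$, and establishing the inverse-type identities in the group and right zero cases. Both are handled by the same mechanism --- rewriting a product by associativity so that an inner factor becomes a product of two elements of one class, then collapsing it via $b_1b_2=\varphi_B(b_1)$ (or $a_1a_2=\varphi_A(a_1)$) and the fact that $\theta_S$ makes products independent of representatives --- but they require care to keep track of which class each intermediate factor belongs to. A minor but necessary bookkeeping point is to orient the labels $A,B$ correctly against each template (for example, in the group case $A$ must be the class mapping to the identity of $S/\theta_S$), so that the extracted data fits the construction exactly.
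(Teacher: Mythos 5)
Your proposal is correct and follows essentially the same route as the paper's proof: the four-case split via Lemma~\ref{lem2} and Remark~\ref{rem3}, recovery of the retract-ideal-extension structure of the classes via Lemma~\ref{lem1} and Remarks~\ref{rem0}--\ref{rem2}, definition of the cross maps as block products (well defined by the $\theta_S$-class property), verification of the codomain/homomorphism properties and of each construction's defining equations, and the conclusion that $S$ coincides block-by-block with the constructed semigroup. The only differences are cosmetic: you make explicit the check of the side condition $\alpha\neq\beta$ or $\gamma\neq\varphi_B$ in the case of Construction~\ref{constr2} (which the paper leaves implicit), and the paper occasionally invokes mediality in its homomorphism computations where your associativity-plus-structural-identities manipulations also suffice.
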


\begin{proof} Semigroups $S$ defined in Construction~\ref{constr1}, Construction~\ref{constr2}, Construction~\ref{constr3}, and Construction~\ref{constr4} are medial semigroups such that the index of $\theta _S$ is $2$.

Conversely, let $S$ be a medial semigroup such that the index of $\theta _S$ is $2$. By Lemma~\ref{lem2} and Remark~\ref{rem3}, $S/\theta _S$ is either a two-element semilattice, a two-element zero semigroup, a two-element group or a two-element right zero semigroup.

First consider the case when $S/\theta _S$ is a two-element semilattice.
Let $A$ and $B$ denote the $\theta _S$-classes of $S$. Then $A$ and $B$ are subsemigroups of $S$ such that one of them is an ideal of $S$. Assume that $B$ is an ideal of $S$. By Remark~\ref{rem1}, $\theta _A=\omega _A$ and $\theta _B=\omega _B$. Thus, by Lemma~\ref{lem1} and Remark~\ref{rem2}, $A$ is a retract ideal extension of the set $E_A$ of all idempotent elements of $A$ by a zero semigroup, and $B$ is a retract ideal extension of the set of all idempotent elements $E_B$ of $B$ by a zero semigroup. Let $\varphi _A$ denote a retract homomorphism of $A$ onto $E_A$ and $\varphi _B$ denote a retract homomorphism of $B$ onto $E_B$. By Remark~\ref{rem2}, $a_1a_2=\varphi _A(a_1)$ for every $a_1, a_2\in A$, and $b_1b_2=\varphi _B(b_1)$ for every $b_1, b_2\in B$. By Remark~\ref{rem0}, $\mid aB\mid =1$ for every $a\in A$. Let $\alpha$ be a mapping of $A$ into $B$ defined by
\[\alpha (a)=aB, \quad a\in A.\] For every $a_1, a_2\in A$, we have
\[\alpha (a_1a_2)=a_1a_2B=a_1Ba_2B=\alpha (a_1)\alpha (a_2),\] because $B$ is a $\theta _S$-class, $a_2B, Ba_2B\subseteq B$, and hence $a_1a_2B=a_1Ba_2B$. Thus $\alpha$ is a homomorphism.
For every $a\in A$, we have \[\alpha (a)\alpha (a)=aBaB=aB=\alpha (a),\] because $BaB\subseteq B$, and hence $aBaB=aB$.
Thus $\alpha$ maps $A$ into the set $E_B$ of all idempotent elements of $B$.

By Remark~\ref{rem0}, $\mid bA\mid=1$ for every $b\in B$. Let $\beta$ be a mapping of $B$ into itself defined by
\[\beta (b)=bA, \quad b\in B.\] Let $e$ be an idempotent element of $A$. Since $S$ is a medial semigroup, we have, for every $b_1, b_2\in B$,
\[ \beta (b_1b_2)=b_1b_2A=b_1b_2e=b_1b_2ee=b_1eb_2e=(b_1A)(b_2A)=\beta (b_1)\beta (b_2),\] that is, $\beta$ is a homomorphism. If $f\in E_B$, then
\[\beta(f)=fA=ffA=ff=f,\] because $B$ is a $\theta _S$-class of $S$ and $fA\subseteq B$. Thus $\beta$ leaves the elements of $E_B$ fixed.
For every $a, a^*\in A$ and $b\in B$, we have
\[(\alpha \circ \varphi _A)(a)=\alpha(\varphi _A(a))=\alpha(aa^*)=(aa^*)b=a(a^*b)=\alpha (a),\] and hence (\ref{1.1}) is satisfied.
For every $a, a^*\in A$ and $b\in B$, we have
\[(\beta \circ \beta )(b)=\beta (\beta (b))=\beta (b)a=(ba^*)a=b(a^*a)=\beta (b).\] Thus (\ref{1.2}) is satisfied.
For every $b, b^*\in B$ and $a\in A$, we have
\[(\varphi _B\circ \beta)(b)=\varphi _B(\beta (b))=\beta (b)b^*=(ba)b^*=b(ab^*)=\varphi _B(b),\] and hence (\ref{1.3}) is satisfied. We can consider the semigroup $(S; \cdot )$ defined in Construction~\ref{constr1}. It is clear that the semigroup $S$ is isomorphic to the semigroup $(S; \cdot )$.

In the second step of the proof, suppose that $S/\theta _S$ is a two-element zero semigroup.
Let $A$ and $B$ denote the $\theta_S$-classes of $S$. We can suppose that $B$ is an ideal of $S$. Then $A^2\subseteq B$. By Remark~\ref{rem1}, $\theta _B=\omega _B$. Then, by Lemma~\ref{lem1} and Remark~\ref{rem2}, $B$ is a retract ideal extension of a left zero semigroup $E_B$ by a zero semigroup, where $E_B$ is the set of all idempotent elements of $B$. Moreover, for arbitrary $b_1, b_2\in B$, we have $b_1b_2=\varphi _B(b_1)$. By Remark~\ref{rem0}, $\mid aA\mid =\mid aB\mid =\mid bA\mid =1$ for every $a\in A$ and $b\in B$. For arbitrary $a\in A$ and $b\in B$, let
\[\alpha (a)=aA,\quad \beta (a)=aB,\quad \gamma (b)=bA.\]
Since $bab\in B$ for every $a\in A$ and $b\in B$ (and hence $(bab, b)\in\theta _S$), we have \[(ab)^2=a(bab)=ab.\] Thus $\beta (a)\in E_B$ for every $a\in A$. For every $b_1, b_2\in B$, we have \[\gamma (b_1b_2)=(b_1b_2)A=b_1(b_2A)=b_1(Ab_2A)=(b_1A)(b_2A)=\gamma(b_1)\gamma(b_2),\] because $b_2A, Ab_2A\subseteq B$ and, by Remark~\ref{rem0}, $\mid b_1B\mid =1$. Thus $\gamma$ is a homomorphism.
For every $f\in E_B$ and $a\in A$, we have $(fa, f)\in \theta _S$, and hence \[fa=f(fa)=ff=f.\] Thus \[\gamma (f)=fA=f,\] that is, $\gamma$ leaves the elements of $E_B$ fixed.

Since $AB, AA\subseteq B$, we have \[a(AB)=aB=a(AA)\] for every $a\in A$. Then \[\varphi _B\circ \alpha =\beta =\gamma \circ \alpha,\] that is, (\ref{2.1}) is satisfied. Using also the inclusions $AB, AA\subseteq B$, we get \[bAB=bB=b(AA)\] for every $b\in B$. Then
\[\varphi _B\circ \gamma =\varphi _B= \gamma \circ \gamma,\] that is, (\ref{2.2}) is satisfied.
Thus we can consider the semigroup $(S; \bullet )$ defined in Construction~\ref{constr2}. It is clear that $S$ is isomorphic to $(S; \bullet )$.

In the third step of the proof, suppose that $S/\theta _S$ is a two-element group.
Let $A$ and $B$ denote the $\theta_S$ -classes of $S$. We can suppose that $A$ is the identity element of $S/\theta _S$. Then $A$ is a subsemigroup of $S$, $AB, BA\subseteq B$, and $B^2\subseteq A$. By Remark~\ref{rem1}, $\theta _ A=\omega _A$. Then, by Lemma~\ref{lem1} and Remark~\ref{rem2}, $A$ is a retract extension of a left zero semigroup $E_A$ by a zero semigroup where $E_A$ is the set of all idempotent elements of $A$. Moreover, for arbitrary $a_1, a_2\in A$, we have $a_1a_2=\varphi _A(a_1)$. By Remark~\ref{rem0}, $\mid aB\mid =\mid bA\mid =\mid bB\mid =1$ for every $a\in A$ and $b\in B$. For arbitrary $a\in A$ and $b\in B$, let
\[\alpha(a)=aB, \quad \beta (b)=bA, \quad \gamma(b)=bB.\]
Since $b_2b_1b_2\in B$ for every $b_1, b_2\in B$ (and so $(b_2b_1b_2, b_2)\in \theta _S$), we have \[(b_1b_2)^2=b_1(b_2b_1b_2)=b_1b_2.\] Thus $\gamma$ maps $B$ into $E_A$.

Let $b\in B$ be an arbitrary element. Since $B^2\subseteq A$, we have
\[\beta (b)=bA=bB^2=(bB)B=\gamma (b)B=\alpha (\gamma (b))=(\alpha \circ \gamma)(b).\] Thus
\[\beta =\alpha \circ \gamma,\] and hence (\ref{beta}) is satisfied.
For arbitrary $a\in A$, we have
\[(\alpha \circ \varphi _A)(a)=\alpha (\varphi _A(a))=\varphi _A(a)B=(aa)B=a(aB)=\alpha (a)\] and
\[(\gamma \circ \alpha)(a)=\gamma (\alpha (a))=\alpha (a)B=(aB)B=a(BB)=\varphi _A(a).\] Thus conditions (\ref{3.1}) and (\ref{3.2}) are satisfied.
Hence we can consider the semigroup $(S; * )$ defined in Construction~\ref{constr3}. It is clear that $S$ is isomorphic to $(S; * )$.

In the fourth step of the proof, suppose that $S/\theta _S$ is a two-element right zero semigroup.
Let $A$ and $B$ denote the $\theta _S$-classes of $S$. Then $A$ and $B$ are subsemigroups of $S$ such that $AB\subseteq B$ and $BA\subseteq A$.
By Remark~\ref{rem1}, $\theta _A=\omega _A$ and $\theta _B=\omega _B$. Thus, by Lemma~\ref{lem1} and Remark~\ref{rem2}, $A$ is a retract ideal extension of a left zero semigroup $E_A$ by a zero semigroup, where $E_A$ is the set of all idempotent elements of $A$. Moreover, for arbitrary $a_1, a_2\in A$, we have $a_1a_2=\varphi _A(a_1)$. Similarly, $B$ is a retract ideal extension of a left zero semigroup $E_B$ by a zero semigroup, where $E_B$ is the set of all idempotent elements of $B$. Moreover, for arbitrary $b_1, b_2\in B$, we have $b_1b_2=\varphi _B(b_1)$.

For every $a\in A$ and $b\in B$, $\mid aB\mid =\mid bA\mid =1$ by Remark~\ref{rem0}. Let $\alpha$ be a mapping of $A$ into $B$ defined by \[\alpha (a)=aB,\quad a\in A,\] and $\beta$ be a mapping of $B$ into $A$ defined by \[\beta (b)=bA, \quad b\in B.\]

For every $a_1, a_2\in A$, we have $a_2B, Ba_2B\subseteq B$, and hence $a_1a_2B=a_1Ba_2B$, because $\mid a_1B\mid =1$ by Remark~\ref{rem0}. Then
\[\alpha (a_1a_2)=a_1a_2B=a_1Ba_2B=\alpha (a_1)\alpha (a_2).\] Thus $\alpha$ is a homomorphism.
For every $a\in A$, we have $BaB\subseteq B$, and hence $a(BaB)=aB$, because $\mid aB\mid =1$ by Remark~\ref{rem0}. Then \[(\alpha (a))^2=\alpha (a)\alpha (a)=aBaB=aB=\alpha (a),\] and hence $\alpha$ maps $A$ into $E_B$.
It can be similarly proved that $\beta$ is a homomorphism which maps $B$ into $E_A$.

For arbitrary $a\in A$ and $b\in B$,
\[(\alpha \circ \varphi _A)(a)=\alpha (\varphi _A(a))=\varphi _A(a)b=(aa)b=a(ab)=\alpha (a)\] and
\[(\varphi _B\circ \alpha)(a)=\varphi _B(\alpha (a))=\alpha (a)b=(ab)b=a(bb)=\alpha (a).\] Thus (\ref{4.1}) is satisfied.
We can proof in a similar way that (\ref{4.2}) is satisfied.
For arbitrary $a\in A$ and $b\in B$, \[(\alpha \circ \beta )(b)=\alpha (\beta (b))=\beta (b)b=(ba)b=b(ab)=\varphi _B(b).\] Thus (\ref{4.3}) is satisfied. (\ref{4.4}) can be similarly proved.
Consider the semigroup $(S; \star )$ defined as in Construction~\ref{constr4}. It is clear that $S$ is isomorphic to $(S; \star )$.
\end{proof}

By \cite[Dual of Lemma 3.2]{Nagymedleftequ}, a semigroup $S$ is right commutative if and only if the factor semigroup $S/\theta _S$ is commutative. This result and the proof of Theorem~\ref{thm1} together imply the following corollary.

\begin{corollary}\label{rightcomm} A semigroup $S$ is a right commutative semigroup such that the index of $\theta _S$ is $2$ if and only if $S$ is isomorphic to one of the semigroups defined in Construction~\ref{constr1}, Construction~\ref{constr2}, and Construction~\ref{constr3}.
\end{corollary}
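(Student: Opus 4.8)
The plan is to reduce the statement to Theorem~\ref{thm1} by exploiting the cited characterization that a semigroup $S$ is right commutative if and only if the factor semigroup $S/\theta_S$ is commutative (the dual of Lemma 3.2 in \cite{Nagymedleftequ}). Since every right commutative semigroup is medial, a right commutative semigroup $S$ whose $\theta_S$ has index $2$ is in particular a medial semigroup of the same kind, so Theorem~\ref{thm1} already guarantees that $S$ is isomorphic to one of the four semigroups of Construction~\ref{constr1}--Construction~\ref{constr4}. The whole task is therefore to decide which of these four constructions can actually occur under the stronger hypothesis of right commutativity, and conversely to check that each surviving construction really is right commutative.

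For the backward implication I would first record, as is noted inside the constructions themselves, that the factor semigroups $S/\theta_S$ of the four constructions are respectively a two-element semilattice, a two-element zero semigroup, a two-element group, and a two-element right zero semigroup. Of these four two-element semigroups the first three are commutative, while the right zero semigroup is not: for its two elements $\bar a,\bar b$ one has $\bar a\bar b=\bar b\neq\bar a=\bar b\bar a$. Hence the semigroups built in Construction~\ref{constr1}, Construction~\ref{constr2}, and Construction~\ref{constr3} have commutative $S/\theta_S$ and are therefore right commutative by the dual lemma, which yields one inclusion. Conversely, if $S$ is right commutative then $S/\theta_S$ is commutative, so $S/\theta_S$ cannot be a two-element right zero semigroup; consequently the case of Construction~\ref{constr4} is ruled out, and the isomorphism furnished by Theorem~\ref{thm1} must be with one of the first three constructions.

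The only point that needs a little care, and which I expect to be the main (though modest) obstacle, is the exclusion of Construction~\ref{constr4}: one must be sure that right commutativity genuinely \emph{rules it out} rather than merely failing to be guaranteed by it. This becomes clean once one observes that $\theta_S$ is defined intrinsically, as the kernel of the right regular representation, so an isomorphism $S\to S'$ induces an isomorphism $S/\theta_S\to S'/\theta_{S'}$; equivalently, right commutativity is an identity and is therefore preserved under isomorphism, whereas the Construction~\ref{constr4} semigroups fail to be right commutative precisely because their $S/\theta_S$ is a noncommutative right zero semigroup. Beyond making this invariance explicit I anticipate no real difficulty, since the heavy structural work has already been carried out in Theorem~\ref{thm1} and in the verification, inside each construction, both of right commutativity and of the isomorphism type of $S/\theta_S$.
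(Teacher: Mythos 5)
Your proof is correct and follows essentially the same route as the paper: both rest on the dual of Lemma 3.2 (right commutative $\Leftrightarrow$ $S/\theta_S$ commutative) together with the fact that the four constructions realize $S/\theta_S$ as a two-element semilattice, zero semigroup, group, and right zero semigroup respectively, of which only the last is noncommutative. The only cosmetic difference is that the paper re-enters the case analysis in the proof of Theorem~\ref{thm1}, while you use the statement of Theorem~\ref{thm1} as a black box and then exclude Construction~\ref{constr4} by isomorphism-invariance of right commutativity; this is the same argument in substance.
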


By \cite{NagyToth}, in a finite semigroup $S$ both of the conditions that the index of $\theta _S$ is $m$ and $P_{\theta _S}(S)=\frac{1}{m}$ are satisfied if and only if  each $\theta_S$-class contains the same number of elements. Thus the following result is a consequence of Theorem~\ref{thm1}.

\begin{theorem}\label{thm2}  $S$ is a finite medial semigroup such that the index of $\theta _S$ is $2$ and  $P_{\theta _S}(S)=\frac{1}{2}$ if and only if $S$ is isomorphic to one of the semigroups defined in Construction~\ref{constr1}, Construction~\ref{constr2}, Construction~\ref{constr3}, and Construction~\ref{constr4} satisfying the condition $\mid A\mid =\mid B\mid <\infty$ in each of the four cases.
\end{theorem}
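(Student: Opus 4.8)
The plan is to obtain Theorem~\ref{thm2} as a direct combination of Theorem~\ref{thm1} with the probabilistic result of \cite{NagyToth} quoted immediately above, namely that in a finite semigroup $S$ the two conditions ``the index of $\theta_S$ equals $m$'' and ``$P_{\theta_S}(S)=\frac1m$'' hold simultaneously precisely when every $\theta_S$-class of $S$ has the same cardinality. Since we are in the case $m=2$ and, in each of the four constructions, the two $\theta_S$-classes are exactly the sets $A$ and $B$, the equal-cardinality condition specializes to the single equation $|A|=|B|$.

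For the forward implication I would start from a finite medial semigroup $S$ with index of $\theta_S$ equal to $2$ and $P_{\theta_S}(S)=\frac12$. Theorem~\ref{thm1} immediately yields an isomorphism of $S$ onto one of the semigroups of Construction~\ref{constr1}, Construction~\ref{constr2}, Construction~\ref{constr3}, Construction~\ref{constr4}, under which the two $\theta_S$-classes correspond to $A$ and $B$. The quoted result, applied with $m=2$, then forces these two classes to have equal size, that is, $|A|=|B|$; finiteness of $S$ gives $|A|=|B|<\infty$, which is exactly the asserted restriction on the construction.

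For the reverse implication I would assume $S$ is isomorphic to one of the four semigroups with $|A|=|B|<\infty$. Then $S=A\cup B$ is finite, and Theorem~\ref{thm1} (equivalently, the verifications already carried out inside the constructions) guarantees that $S$ is medial, that the index of $\theta_S$ is $2$, and that the $\theta_S$-classes are $A$ and $B$. Because these two classes are equinumerous, the quoted result of \cite{NagyToth}, again with $m=2$, delivers $P_{\theta_S}(S)=\frac12$, completing the equivalence.

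There is essentially no genuine obstacle here: all of the structural content has already been discharged in Theorem~\ref{thm1}, and the probabilistic equivalence is supplied verbatim by \cite{NagyToth}. The only point requiring (trivial) attention is the bookkeeping remark that, when there are exactly two $\theta_S$-classes, the statement ``all classes have the same cardinality'' reduces to the one equation $|A|=|B|$, so that the finiteness hypothesis together with this equation is captured precisely by the condition $|A|=|B|<\infty$ imposed on each of the four constructions.
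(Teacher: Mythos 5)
Your proposal is correct and follows exactly the paper's own route: the paper likewise obtains Theorem~\ref{thm2} as an immediate consequence of Theorem~\ref{thm1} combined with the quoted result of \cite{NagyToth} (equal index $m$ and $P_{\theta_S}(S)=\frac{1}{m}$ hold simultaneously iff all $\theta_S$-classes are equinumerous), with the $\theta_S$-classes identified as $A$ and $B$ in each construction so that the condition reduces to $\mid A\mid=\mid B\mid<\infty$.
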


Corollary~\ref{rightcomm} and Theorem~\ref{thm2} imply the following corollary.

\begin{corollary} $S$ is a finite right commutative semigroup such that the index of $\theta _S$ equals $2$ and $P_{\theta _S}(S)=\frac{1}{2}$ if and only if $S$ is isomorphic to one of the semigroups defined in Construction~\ref{constr1}, Construction~\ref{constr2}, and Construction~\ref{constr3} satisfying the condition $\mid A\mid =\mid B\mid <\infty$ in each of the three cases.
\end{corollary}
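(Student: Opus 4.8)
The plan is to obtain this corollary as a purely formal consequence of the two results quoted immediately above it, Corollary~\ref{rightcomm} and Theorem~\ref{thm2}, with essentially no new semigroup-theoretic content. The one structural fact I would lean on is that every right commutative semigroup is medial (noted in Section~\ref{prelim}), so that a finite right commutative semigroup with index of $\theta_S$ equal to $2$ and $P_{\theta_S}(S)=\frac{1}{2}$ is in particular a finite medial semigroup with the same invariants, and thus falls under the hypotheses of Theorem~\ref{thm2}. The other ingredient is the criterion of \cite{NagyToth} used throughout: in a finite semigroup the conditions that the index of $\theta_S$ equals $m$ and that $P_{\theta_S}(S)=\frac{1}{m}$ hold simultaneously precisely when all $\theta_S$-classes are equinumerous.

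For the forward implication I would start from such an $S$ and apply Corollary~\ref{rightcomm}: since $S$ is right commutative with index of $\theta_S$ equal to $2$, it is isomorphic to one of the semigroups of Construction~\ref{constr1}, Construction~\ref{constr2}, or Construction~\ref{constr3}, and in each of these the two $\theta_S$-classes are exactly $A$ and $B$. Finiteness of $S$ forces $\mid A\mid, \mid B\mid<\infty$, and the \cite{NagyToth} criterion with $m=2$ turns the hypothesis $P_{\theta_S}(S)=\frac{1}{2}$ into $\mid A\mid=\mid B\mid$. Hence $S$ is isomorphic to one of the three constructions with $\mid A\mid=\mid B\mid<\infty$, as required. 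Equivalently one may route the argument through Theorem~\ref{thm2}, which already yields $\mid A\mid=\mid B\mid<\infty$, and then invoke Corollary~\ref{rightcomm} only to discard Construction~\ref{constr4}.

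For the converse I would take any semigroup furnished by Construction~\ref{constr1}, Construction~\ref{constr2}, or Construction~\ref{constr3} subject to $\mid A\mid=\mid B\mid<\infty$. Each of these constructions is shown there to be right commutative, and in each the $\theta_S$-classes are $A$ and $B$, so the index of $\theta_S$ is $2$; the size hypothesis makes $S$ finite. Since the two classes are then equinumerous, the \cite{NagyToth} criterion returns $P_{\theta_S}(S)=\frac{1}{2}$, completing the equivalence.

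I expect no genuine obstacle: the statement is a corollary in the literal sense. The only point needing a word of care is the exclusion of Construction~\ref{constr4}, whose factor semigroup $S/\theta_S$ is a two-element right zero semigroup and hence not commutative; this is exactly what Corollary~\ref{rightcomm} (resting on the dual of Lemma~\ref{lem2}) rules out, and it is why the corollary must invoke Corollary~\ref{rightcomm} and not Theorem~\ref{thm2} alone.
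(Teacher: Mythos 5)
Your proposal is correct and follows exactly the paper's route: the authors likewise obtain this corollary by combining Corollary~\ref{rightcomm} with Theorem~\ref{thm2} (equivalently, the criterion from \cite{NagyToth} that index $m$ together with $P_{\theta_S}(S)=\frac{1}{m}$ is equivalent to equinumerous $\theta_S$-classes), with Construction~\ref{constr4} excluded precisely because its factor semigroup is a right zero semigroup and hence not commutative. Your write-up merely makes explicit the short deduction the paper leaves implicit.
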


\bibliographystyle{line}

\end{document}